\newtheorem{theorem}{Theorem}
\newtheorem{proposition}{Proposition}
\newtheorem{lemma}{Lemma}
\begin{document}

\title{Corner the Empress}%\thanks{}

%\titlerunning{Short form of title}        % if too long for running head

\author{Robbert Fokkink, Gerard Francis Ortega, \and Dan Rust} 
\dedicatory{Dedicated to the memory of
Elizabeth Alexandra Mary Windsor, Elizabeth II, by the Grace of God, of the United Kingdom of Great Britain and Northern Ireland and other realms and territories. Queen, Head of the Commonwealth, Defender of the Faith, etc, etc, etc.
}

\keywords{Impartial Combinatorial Games, Combinatorics of Words, Aperiodic Order}
\subjclass{91A46 \and 68R15}
	\begin{abstract}
Wythoff Nim aka Corner the Lady is a classic 
combinatorial game.
A Queen is placed on an infinite 
chess board and two players
take alternate turns, moving the Queen closer
to the corner. The first player that corners
the Queen wins.
What happens if the Queen gets superior
powers and is able to step off the diagonal or bounce against a side?
In this paper we study the intriguing patterns that emerge
from such games.
In particular, we are interested in games in which
the $P$-positions can be described by morphic sequences.
We use the theorem-prover \texttt{Walnut} to
prove some of our results.
\end{abstract}
\maketitle

\section{All rise}
\label{intro}

The Queen is the most powerful piece on the chess board. She can move
diagonally, horizontally, and vertically for as far as it may please her, 
in
one go. It was not always so. Originally, she was a weak piece that
could only take one diagonal step at a time. In the original Indian
game Chaturanga, from which all modern versions of chess have been derived,
the Queen was known as the Mantri, or Counselor.
She still goes by that name in Xiangqi, the Chinese version of chess,
in which she is not allowed to leave the palace.
During the late Middle Ages, the Counselor changed gender and rose to 
prominence in European
chess. The modern movement of the Queen started in Spain under the
reign of Isabella I of Castile, perhaps inspired by her great political
power~\cite{Y}. In this paper, we will consider Queens with even
greater mobility.

An ordinary chess board has only 64 squares, 
which are described by letters for columns and numbers for rows in the
European notation
(of course there also exists an English notation, but no one really understands why).
Corner the Lady, or Wythoff Nim, is played on an infinite chess board 
on which the sun never sets (at least in the positive quadrant).
Its rows and columns are both numbered $0, 1, 2,\ldots$ for
lack of letters. 
Anne places the Queen on any square $(m,n)$ and Beatrix moves first.
The players
make alternate moves but only so that the Manhattan distance $m+n$
to the corner $(0,0)$ decreases. 
The player that gets the Queen into the corner wins.
The reach of the Queen is most beneficial for this game,
which would take much longer with a Counselor.
 
To win the game, Anne could certainly place the Queen on $(0,0)$,
but that is not very entertaining.
What if she moves further out in her realms? 
The set of all winning squares, formally known as $P$-positions (the complement of which are the $N$-positions), form a
fascinating pattern that continues to be a source of inspiration
for the study of combinatorial games~\cite{DFGHKL}.
Throughout the paper we will reserve $(a_n,b_n)$ for the $P$-positions
and by symmetry we may restrict ourselves to $a_n\leq b_n$.
The positions are lined up by their first coordinate so
that $a_n$ is increasing. It turns out that $b_n$ is increasing as well 
for all games in this paper
(this does not apply to all versions of Wythoff's game, see for instance~\cite{L})
and that the sets $\{a_n\colon n\in\mathbb N\}$ and $\{b_n\colon n\in\mathbb N\}$
are complementary if we ignore the first $P$-position $(0,0)$.
Wythoff~\cite{W} showed for his original game
that $a_n=\lfloor n\phi\rfloor$ and
$b_n=a_n+n$ for the golden mean~$\phi$, which
satisfies $\phi^2=\phi+1$.
%\begin{table}[htbp]
%\caption{\small{Corner the Lady: the first non-zero
%$P$-positions (winning squares). These positions
%are on the consecutive diagonals $(n,n+j)$ for $j=1,2,\ldots$. 
%}\label{tbl1}}
%\begin{tabular}{cccccccccccccccc}
%\rowcolor[gray]{.8}$\Delta$&
%1&2&4&5&6&7&9&10&11&13&14&15&16&18&19 \\
%\hline
%1&3&4&6&8&9&11&12&14&16&17&19&21&22&24  \\
%2&5&7&10&13&15&18&20&23&26&28&31&34&36&39 \\
%\hline
%\rowcolor[gray]{.8}$\Sigma$&3&8&12&17&20&25&29&34&39&43&48&51&56&60&65
%\end{tabular}
%\end{table}
Holladay~\cite{H} considered what happens if the Queen can stroll slightly off diagonal.
More specifically, Holladay's Queen can move from $(x,x+c)$ to $(x',x'+d)$ if $x'<x$ and
$|c-d|\leq k-1$ for $k\in\mathbb N$.
Let's call such a Queen a $k$-Queen.\footnote{In fact, Holladay also considered
other kinds of Queen that lead to the same $P$-positions.}  For $k=1$ she is the original Queen.   
Holladay proved that the $P$-positions 
for Corner the $k$-Queen 
satisfy $a_n=\lfloor n\psi_k	\rfloor$
and $b_n=a_n+kn$ for quadratic numbers $\psi_k$ that have 
continued fraction expansion $[1;k,k,k,\ldots]$.

In this paper, we consider Queens that have a greater mobility
than the standard Queen. A good name for games in which they
occur seems to be \emph{Corner the Empress}, since Empresses rank above Queens.
Eric Duch\^ene and Michel Rigo introduced the idea of a morphic game~\cite{DR} (see Section 4).
Together with Julien Cassaigne they showed that almost all Beatty sequences
(homogeneous or not) arise from the set of $P$-positions in certain take-away games~\cite{CDR}.
One motivation for our present paper is a quest for more morphic games.
All Empresses that we consider in this paper lead to morphic games.
Some are old and some are new. All are getting cornered,
which seems to be the way to go for royals in our modern age.

Our paper is a sequel to earlier work~\cite{FR} in which we introduced
the game of Splithoff. This game is closely related to the Queens on a Spiral
problem that was solved in~\cite{DSS}.
We left two conjectures in our earlier paper, one of which 
we solve in Section~3.
The other conjecture, which we consider in Section~5,
was recently solved by Jeffrey Shallit~\cite{S2} using the automatic theorem prover \texttt{Walnut}. This is an elegant and powerful tool that can be used with great effect in the analysis of morphic sequences. 
Indeed, we shall show in Section~6 that \texttt{Walnut} is also able to prove the other
conjecture from our earlier paper.
We encourage the reader to check out the new and highly entertaining textbook~\cite{S1} for many more examples of the power of \texttt{Walnut}.

\section{The Queen Bee}

Suppose the Queen has the ability to bounce, or perhaps more aptly: 
if Her Majesty starts out diagonally, it may please her
to continue her way perpendicularly
once she reaches the boundary of her domain,
as long as the Manhattan distance decreases.
Because of this bounciness, we shall call her the
Queen Bee.
\begin{figure}[htbp]
	\centering
	\includegraphics[width=0.35\textwidth]{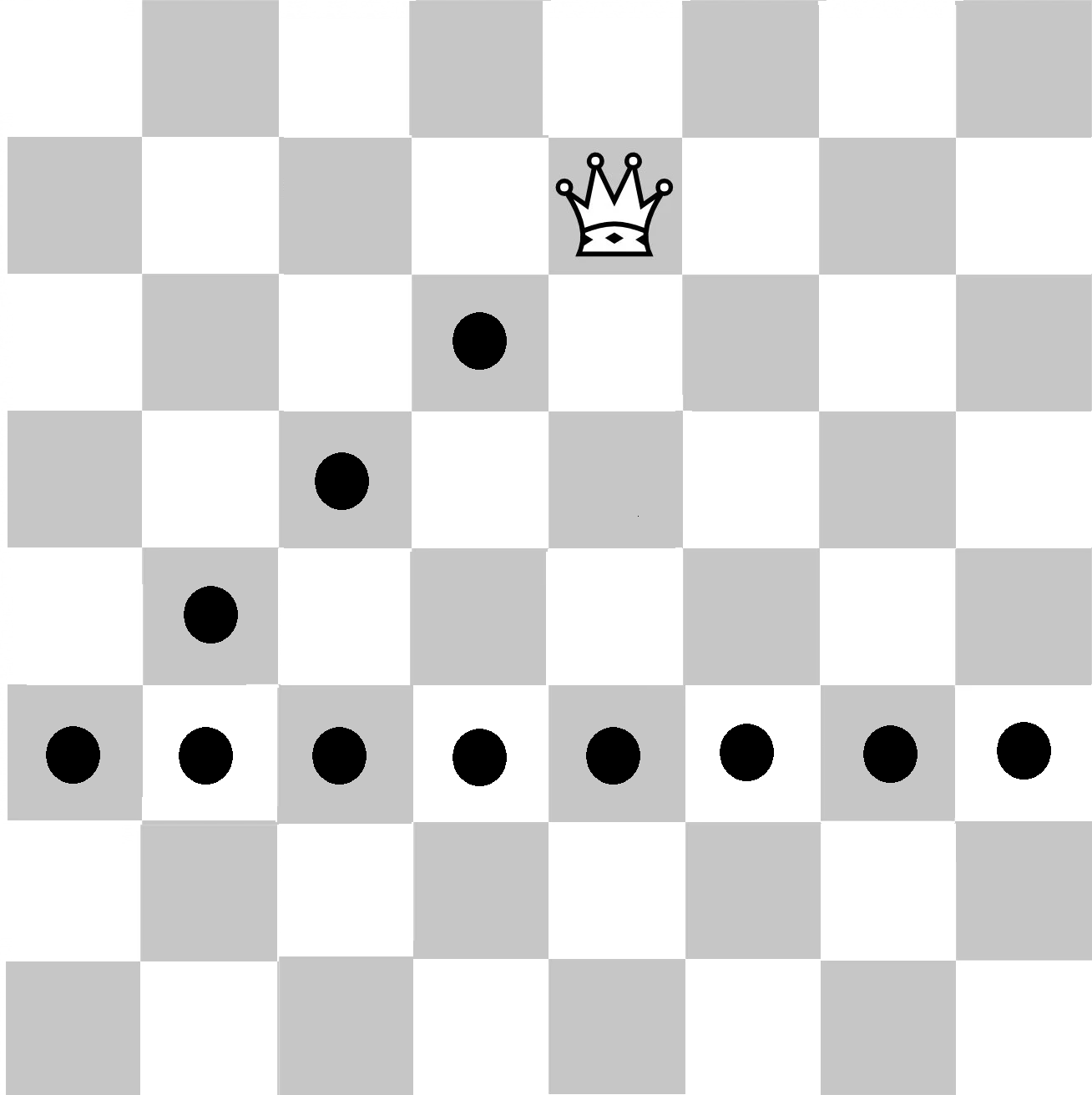}
	\caption{Possible moves of a Queen Bee that can bounce perpendicularly (black dots),
	if Her Majesty starts out
	on a diagonal.}
	\label{fig1}
\end{figure}
The first few $P$-positions of Corner the Queen Bee 
are given in Table~\ref{tbl1}.
Its defining property is that all natural
numbers occur once, and only once, as an $a_n$ or a $b_n$ and that
$b_n=2a_n$. 
It turns out that this Table also arises from a subtraction game
considered by Fraenkel~\cite{Fsub}, who calls the $a_n$ vile
and the $b_n$ dopey.

\begin{table}[htbp]
\caption{\small{The first entries
in the table of $P$-positions in Corner the Queen Bee. All coordinates
satisfy $b_n=2a_n$.
The $a_n$ are the numbers whose binary representations end with an even number of zeros. 
The $b_n$ are those that end with an odd number of zeros.
Notice that
all positions are on diagonals $(x,x+a)$ for which
$a$ is in the top row.
These are the sequences A003159 and A036554
in the OEIS.
}\label{tbl1}}
\begin{tabular}{cccccccccccccccc}
%\rowcolor[gray]{.8}$\Delta$&
%1&2&4&5&6&7&9&10&11&13&14&15&16&18&19 \\
\hline
1&3&4& 5& 7& 9&11&12&13&15&16&17&19&20&21  \\
2&6&8&10&14&18&22&24&26&30&32&34&38&40&42 \\
\hline
%\rowcolor[gray]{.8}$\Sigma$&3&8&12&17&20&25&29&34&39&43&48&51&56&60&65
\end{tabular}
\end{table}

\begin{theorem}\label{thm1}
The $P$-positions of Corner the Queen Bee satisfy
$b_n=2a_n$ and each natural number occurs once, and only once, as
a coordinate of a $P$-position.
\end{theorem}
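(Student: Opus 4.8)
The plan is to use the classical characterization of $P$-positions: since every legal move strictly lowers the nonnegative integer $x+y$, there is a \emph{unique} set $\mathcal P$ of positions that is both \emph{stable} (no legal move leads from $\mathcal P$ back into $\mathcal P$) and \emph{absorbing} (from every position outside $\mathcal P$ some legal move lands in $\mathcal P$), and this $\mathcal P$ is exactly the set of $P$-positions. I would therefore take as candidate the set $\mathcal P$ consisting of $(0,0)$ together with all $(a,2a)$ and $(2a,a)$ where $a$ ranges over $A$, the positive integers whose binary expansion ends in an even number of zeros (A003159). Both assertions of the theorem are then built into $\mathcal P$ once it is identified with the $P$-positions: the relation $b_n=2a_n$ is the pairing $a\mapsto 2a$, while ``each natural number occurs exactly once'' rests on the combinatorial lemma that doubling turns an even number of trailing zeros into an odd one, so that $a\mapsto 2a$ is a bijection of $A$ onto $B$ (the integers ending in an odd number of zeros, A036554) with $A\sqcup B=\mathbb N_{>0}$. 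In other words, the pairs $\{a,2a\}_{a\in A}$ form a \emph{perfect matching} of the positive integers; in particular every value $v$ lies in a unique such pair, a fact I will reuse constantly.

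With the matching in hand, stability is short. A horizontal or vertical move preserves one coordinate $v\in\{a,2a\}$, so it can only re-enter $\mathcal P$ by hitting the unique partner of $v$ in the other slot; for the vertical move this partner is $2a$, out of range since we are decreasing a coordinate below $2a$, and for the horizontal move it is $a$, again excluded. The perpendicular bounce, which from $(a,2a)$ (taking $a\le 2a$) reaches the points $(w,a)$ with $1\le w\le 2a-1$, likewise fixes the coordinate $a$, whose only partner is $2a$; this is barred by the strict cap $w\le 2a-1<2a$. Finally the diagonal move preserves the coordinate difference, which for a pair $\{c,2c\}$ equals the smaller element $c$, so a diagonal step off $(a,2a)$ can reach $\{c,2c\}$ only with $c=a$, i.e.\ not at all for a genuine move. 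The symmetric positions $(2a,a)$ are handled identically.

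For absorption I would show that ordinary Queen moves already deal with every position except a thin residual family, and that the bounce mops up precisely that family. Taking $x\le y$ without loss of generality, a finite case analysis organized by the membership of $x$ and $y$ in $A$ or $B$ shows that a non-$\mathcal P$ position has \emph{no} horizontal, vertical, or diagonal move into $\mathcal P$ only when $x,y\in A$, $x<y<2x$, and the difference $d=y-x$ lies in $B$. For such a position the bounce sends $(x,y)$ to $(w,d)$ for any $1\le w\le 2x-1$; choosing $w=d/2$ (an integer because $d\in B$ is even, and in range because $d<x$) lands on $(d/2,d)\in\mathcal P$, since $d\in B$ makes $\{d/2,d\}$ a matched pair. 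The main obstacle, and where the real content sits, is exactly this interplay of the bounce with the arithmetic of trailing zeros: one must verify \emph{simultaneously} that the perpendicular leg, with its precise stopping rule $w\le 2x-1$, never links two $P$-positions and yet always reaches the leftover positions. The remaining work is the routine finite bookkeeping of the horizontal, vertical, and diagonal cases, which I expect to be mechanical once the matching lemma is established.
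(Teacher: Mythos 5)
Your argument is correct, but it runs in the opposite direction from the paper's. You posit the closed form up front --- the perfect matching $a\mapsto 2a$ of A003159 onto A036554 --- and verify that the resulting candidate set is stable and absorbing, which by the standard uniqueness argument for games with decreasing Manhattan distance identifies it with the $P$-positions. The paper instead derives the structure from the game: it first shows by a counting argument (every position in column $a$ reaches only positions with $x$-coordinate below $2a$, so some $P$-position must be reachable from infinitely many positions in that column, which forces a vertical move) that each row and column contains exactly one $P$-position, giving the complementarity claim with no binary arithmetic at all; it then runs a mex-induction showing that $a_{n+1}$ is the least unused coordinate and that $b_{n+1}=2a_{n+1}$. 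The decisive computations coincide: in the critical column the position at height $a_{n+1}+d$ is killed by the diagonal move to $(d,2d)$ when $d$ is vile and by the bounce to $(d/2,d)$ when $d$ is dopey, which is exactly your residual case $x,y\in A$, $x<y<2x$, $y-x\in B$, and your observation that the cap $w\le 2x-1$ bars the bounce from linking two $P$-positions while still reaching $(d/2,d)$ is the same pivot the paper relies on. What your route buys is an explicit identification with the trailing-zeros sequences as part of the proof (the paper only records this in a table caption) and a self-contained verification; the cost is that you must guess the answer in advance and carry out somewhat longer stability/absorption bookkeeping. The paper's route is shorter and explains \emph{why} the coordinates are complementary, but it leans on the inductive meaning of vile and dopey rather than their arithmetic description.
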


\begin{proof}
It is not hard to see that every column and every row contain at most
one $P$-position. 
To see that every column (and hence every row) contains a
$P$-position, observe that all
positions within reach of $(a,y)$ have $x$-coordinate bounded
by $2a$. 
This is immediately clear for non-bouncing moves.
A bouncing move leads to a position $(a',y-a)$ such that
$a'<2a$, because the Manhattan distance decreases.
Therefore, the number of $P$ positions within reach of $(a,y)$
is at most $2a$ and one of these positions must be reachable
by infinitely many $(a,y)$. This is only possible by
vertical moves. We conclude that the column contains a $P$-position.
It follows that every natural number occurs once,
and only once, as an $a_n$ or a $b_n$. 

The minimal excludant of $\{a_i,b_i\colon i\leq n\}$ occurs as an $a$ or
a $b$. It has to be $a_{n+1}$ since we line up $P$-positions
by increasing $x$-coordinate. Now consider
an arbitrary $(a_{n+1},a_{n+1}+d)$ for $0< d<a_{n+1}$.
A diagonal move bounces against the side at $(0,d)$ with
$d\in\{a_i,b_i\colon i\leq n\}$. If $d$ is a vile number, then
by induction
$(d,2d)$ is a $P$-position. It is within reach of $(a_{n+1},a_{n+1}+d)$
since $d<a_{n+1}$.
If $d$ is dopey, then the $P$-position $(d/2,d)$ is within reach.
We conclude that $(a_{n+1},a_{n+1}+d)$ is an $N$-position.
It is easy to check that none of the $(a_i,b_i)$ are within
reach of $(a_{n+1},2a_{n+1})$. Hence, this is a $P$-position.    
\end{proof}

\section{The Queen Dee}

In our earlier work~\cite{FR} we
considered a Queen that can reflect once against a side and continue
diagonally, changing her course from diagonal (constant $y-x$) to anti-diagonal 
(constant $y+x$) if she
bounces against a side of the board. 
We call her a Queen Dee.
In this paper, we consider a $2$-Queen with the same reflective property
and we call her a $2$-Queen Dee.
A $2$-Queen can take one step to the left or one step down before moving
diagonally. A $2$-Queen Dee can reflect off of a side, see~Fig.~\ref{fig2}.
\begin{figure}[htbp]
	\centering
		\includegraphics[width=0.80\textwidth]{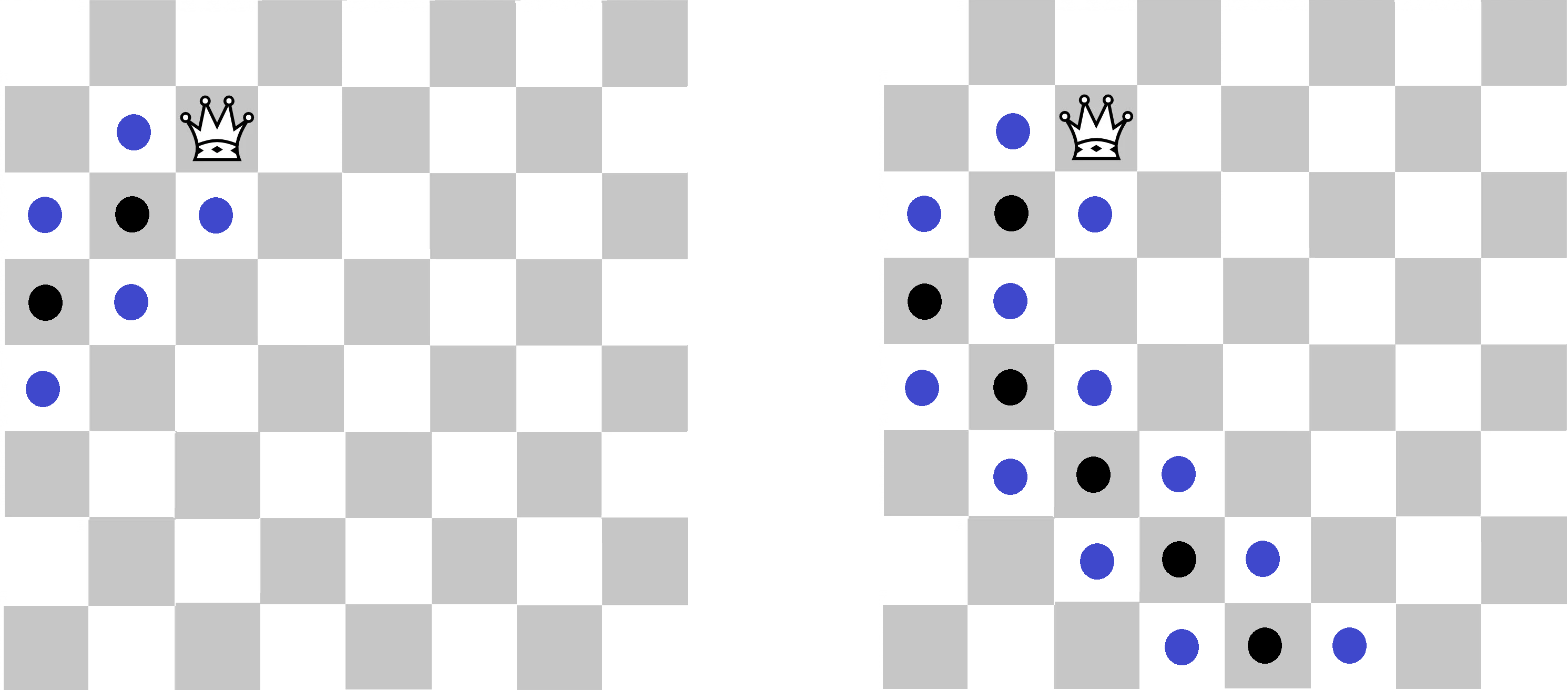}
	\caption{On the left, possible diagonal moves of a Queen (black dots) and a $2$-Queen
 (additional blue dots).
         These moves are extended by reflection for the Queen Dee and the $2$-Queen Dee,
         as illustrated on the right. }
	\label{fig2}
\end{figure}

We showed in~\cite{FR} and~\cite{O}
that the $P$-positions in Corner the Queen Dee can be retrieved
from the morphism $\tau(a)= ab,\ \tau(b)=ac,\ \tau(c)=a$, as follows.
If we iterate this morphism, 
then the limit is the Tribonacci word
\[
\mathbf{t} = \lim_{n\to\infty}\tau^n(a) = abacabaabacababacabaabacab\cdots,
\]                                        
which is an intriguing object
that is studied for its own sake~\cite{RSZ,TW}. If we delete
all $c$'s from $\mathbf t$ then we get the word
$\mathbf t_c=abaabaabaababaabaabaab\cdots$
which turns out to code the $P$-positions $(a_n,b_n)$.
More specifically, the location of the $n$-th $a$ in $\mathbf t_c$ is equal to $a_n$
and the location of the $n$-th $b$ is equal to $b_n$.
In this section, we complement this
result by proving that the $n$-th $P$-position in Corner the $2$-Queen Dee can be retrieved
in the same manner from $\mathbf t_b=aacaaaacaaacaaaaca\cdots$, the Tribonacci word from which all $b$'s have
been deleted.

Recently, Jeffrey Shallit~\cite{S2} gave a mechanical proof using \texttt{Walnut}
that the $P$-positions of the Queen Dee are coded by the Tribonacci word.
It is straightforward to adapt this proof for $P$-positions of the $2$-Queen Dee
and we shall do that in Section~6 below. In this section we give an old-fashioned proof.
\medbreak
The $P$-positions of the $2$-Queen satisfy
$b_n=a_n+2n$, i.e., they are on consecutive even diagonals. 
The $P$-positions of the $2$-Queen Dee
occasionally skip an even diagonal,
as shown in Table~\ref{tbl2}. 
\begin{table}[htbp]
\caption{\small{The first $P$-positions
of the $2$-Queen Dee are all on even diagonals $(x,x+2d)$.
Notice that 
that $2d$ is unequal to $a_n+b_n$ for all $n$.
These are sequences A140102 and A140103 in
the OEIS.
}\label{tbl2}}
\begin{tabular}{cccccccccccccccc}
%\rowcolor[gray]{.8}$\Delta$&
%1&2&4&5&6&7&9&10&11&13&14&15&16&18&19 \\
\hline
1&2&4& 5& 6& 7 & 9&10&11&13&14&15&16&18&19  \\
3&8&12&17&20&25&29&34&39&43&48&51&56&60&65 \\
\hline
%\rowcolor[gray]{.8}$\Sigma$&3&8&12&17&20&25&29&34&39&43&48&51&56&60&65
\end{tabular}
\end{table}
In particular, the positions skip the diagonals 
$(x,x+2d)$ if $2d$ is equal to $a_1+b_1,\ a_2+b_2,\ a_3+b_3,$ etc.
It turns out that this is a defining property.
\medbreak
The standard algorithm to find $P$-positions in impartial games starts by
identifying the set $P_0$ of all (or at least one) 
end-positions, which have no move. 
In our case,
this is $(0,0)$, the $P$-position $(a_0,b_0)$
that we never list in our tables of $(a_n,b_n)$.
Now remove all positions that have a move to $P_0$, and consider the 
resulting game.
Identify its set of end-positions and add them to get 
$P_1$. Remove all positions that have
a move to $P_1$, etc.
Continue until all positions are either in $P_n$ or have been removed.
The union of the $P_n$ is the set of $P$-positions.

To apply this algorithm to the $2$-Queen Dee we introduce some notation:
\begin{eqnarray*}
&A_n=\{a_i\colon i\leq n\},\ B_n=\{b_i\colon i\leq n\}, P_n=\{(a_i,b_i)\colon i\leq n\}   \\ 
&D_n=\{b_i-a_i\colon i\leq n\}, \ S_n=\{a_i+b_i\colon i\leq n\}
\end{eqnarray*}
These are all initial segments of sequences, of coordinates, positions, differences, and sums.
To find the next $P$-position, we need to remove all positions with a move to $P_n$ from
the game. 

Table~\ref{tbl2} indicates that $A_n\cup B_n$ contains all integers up to $\max(A_n)$
and that $D_n\cup S_n$ contains all even integers up to $\max(D_n)$, while both
$D_n$ and $S_n$ contain even integers only. This is our inductive hypothesis, together
with the assumption that $P_n$ contains the first $n+1$ $P$-positions (if we include
the origin) with increasing Manhattan distance to the origin.

\begin{lemma}\label{lem1}
Under the inductive hypothesis 
there is a move from $(x,y)$ with $x\leq y$ to $P_n$ if and only if one of the following holds:
$\{x,y\}\cap \left(A_n\cup B_n\right)\not=\emptyset$ or $\{y-x-1,y-x,y-x+1\}\cap \left(D_n\cup S_n\right)\not=\emptyset$.
\end{lemma}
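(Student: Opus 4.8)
The plan is to turn ``there is a move to $P_n$'' into arithmetic on $(x,y)$ by running through the repertoire of the $2$-Queen Dee and matching it against the two alternatives. Writing $d=y-x$, her moves fall into three families. The \emph{straight} moves are the ordinary horizontal and vertical queen slides, which reach a board point sharing a coordinate with $(x,y)$. The \emph{diagonal} moves are a diagonal slide, optionally preceded by one step left or down, so the point reached has difference in $\{d-1,d,d+1\}$. The \emph{reflected} moves are the same, but bounce once against a wall so the last leg runs along an anti-diagonal, hence the point reached has sum in $\{d-1,d,d+1\}$. One remark is used throughout: by the inductive hypothesis every $b_i-a_i$ and every $a_i+b_i$ is even, so the second alternative can only be met by the even member of $\{d-1,d,d+1\}$; for even $d$ this says $d\in D_n\cup S_n$, and for odd $d$ it says $d-1$ or $d+1$ lies there.

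For the ``only if'' direction I would run through the three families. A straight move fixes one of the two coordinates, so its landing point---being some $(a_i,b_i)$ or its mirror $(b_i,a_i)$---shares that coordinate with $(x,y)$, giving $\{x,y\}\cap(A_n\cup B_n)\neq\emptyset$. A diagonal move onto a $P$-position forces its difference into $\{d-1,d,d+1\}$, and a reflected move forces its sum into $\{d-1,d,d+1\}$; either way the second alternative holds. Since the three families exhaust the legal moves, any move to $P_n$ implies one of the two alternatives.

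The ``if'' direction carries the content, because matching a coordinate, difference, or sum produces a \emph{legal} move only when the witnessing $P$-position lies no farther from the corner. I would dispose of the second alternative first. If it is met by a difference $d_j=v$, the candidate is the (possibly shifted) diagonal slide onto $(a_j,b_j)$, legal when $a_j\le x$; if $a_j>x$ then $b_j\ge y$, and since $x<a_j\le\max A_n$ completeness puts $x\in A_n\cup B_n$, after which the monotonicity of $(a_i),(b_i),(d_i)$ places the $P$-position in column $x$ strictly below $y$, so a vertical slide reaches it. If it is met by a sum $s_j=v$, the (possibly shifted) reflected slide onto $(a_j,b_j)$ is legal once $x\ge1$, while $x=0$ is settled by the vertical slide to the origin. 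For the first alternative I reduce any coordinate match to a match in the column $x$: a row match $y\in A_n\cup B_n$ forces $y\le\max A_n$, hence $x\le\max A_n$ and $x\in A_n\cup B_n$. Given $x=b_i$ the vertical slide onto the mirror $P$-position is legal; given $x=a_i$ it is legal unless $b_i\ge y$, and in that last case $d=y-a_i\le d_i\le\max D_n$, so the second alternative holds and the work above applies.

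The principal obstacle is exactly this reachability bookkeeping: each satisfied condition must be converted into an actual legal move, and whenever the obvious target lies on the wrong side of $(x,y)$ a substitute must be produced. The two covering statements of the inductive hypothesis (that $A_n\cup B_n$ contains every integer up to $\max A_n$ and that $D_n\cup S_n$ contains every even number up to $\max D_n$), together with the monotonicity of the four sequences, are the levers that force such a substitute, typically a vertical slide in a newly located column. Some care is still needed at the ends of the $\pm1$ window and in keeping the disjoint roles of $D_n$ (true diagonals) and $S_n$ (reflected, anti-diagonal diagonals) straight, so that odd diagonals are linked to the correct even neighbour and no configuration is counted twice or overlooked.
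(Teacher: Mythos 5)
Your decomposition of the moves into straight, (shifted) diagonal, and (shifted) reflected families, and the whole ``only if'' direction, match the paper. The ``if'' direction is where you and the paper part ways, and where your sketch has a flaw. The paper sidesteps all of your reachability bookkeeping with one observation: it suffices to exhibit a move \emph{between} $(x,y)$ and some member of $P_n$, in either direction, because if the move happens to run from the $P$-position to $(x,y)$, then $(x,y)$ is reachable from a $P$-position, hence is an $N$-position and already has a move to some $P_k$ with $k<n$. With that remark a shared coordinate, matching difference, or matching sum immediately produces the required move, and no case analysis about which side of $(x,y)$ the witness lies on is needed. Your explicit legality analysis is a legitimate alternative route, but it is exactly where you slip.

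Concretely, the claim that ``a row match $y\in A_n\cup B_n$ forces $y\le\max A_n$'' is false: $y$ may lie in $B_n$, and $\max B_n$ far exceeds $\max A_n$ (in Table~\ref{tbl2}, $25\in B_n$ while $\max A_n=19$ once the first fifteen positions have been generated). So your reduction of every row match to a column match is unjustified as written. It can be repaired --- if $y=b_i$ and $a_i<x$, the horizontal slide to $(a_i,b_i)$ is already legal and no reduction is needed; otherwise $x\le a_i\le\max A_n$ and completeness of $A_n\cup B_n$ supplies the column match --- but that repair is precisely the kind of directional case split the paper's trick makes unnecessary. The rest of your bookkeeping (the $a_j>x$ case for difference matches via monotonicity of the $d_i$, the deferral from $x=a_i$ with $b_i\ge y$ to the second alternative) does check out, modulo the care at the ends of the $\pm 1$ window that you flag but do not carry out.
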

\begin{proof}
Suppose that there exists a move from $(x,y)$ to $(a_i,b_i)\in P_n$
(or its reflection $(b_i,a_i)$).
If the move is horizontal or vertical, then the $P$-position
has to share a coordinate with $(x,y)$. In other words,
$\{x,y\}\cap (A_n\cup B_n)$ is non-empty. 
If the move is diagonal without a bounce, then $y-x=b_i-a_i$
If it is diagonal with a bounce, then $y-x=a_i+b_i$.
Similarly, if the move is off diagonal starting from $(x,y-1)$
or $(x-1,y)$
then $y-x\pm 1=b_i-a_i$ or $y-x\pm 1=b_i+a_i$.
We conclude that a move from $(x,y)$ to $P_n$ is only possible
if one of the two intersections 
$\{x,y\}\cap (A_n\cup B_n)$ or $\{y-x-1,y-x,y-x+1\}\cap \left(D_n\cup S_n\right)$
is non-empty.

We need to prove the converse.
First observe that if $(x,y)$ is reachable from $P_n$, then it has a move to $P_k$
for $k<n$, because any position that is reachable from a $P$-position is an $N$-position.
So all we have to show is that a move between $(x,y)$ and~$P_n$ is possible
if one of the two intersections is non-empty.
Suppose that $\{x,y\}\cap \left(A_n\cup B_n\right)\not=\emptyset$, i.e., there exists a $P$-position
$(a_i,b_i)$ (or its reflection $(b_i,a_i)$)
that is in the same row or column as $(x,y)$. Hence there exists a move
between $(x,y)$ and $(a_i,b_i)$ or its reflection.

Suppose that $\{x,y\}\cap \left(A_n\cup B_n\right)=\emptyset$
and that $\{y-x-1,y-x,y-x+1\}\cap \left(D_n\cup S_n\right)\not=\emptyset$.
If $y-x$ is even,
then $(y-x)\in \left(D_n\cup S_n\right)\not=\emptyset$. 
We have that
$a_i\pm b_i=y-x$ for a proper choice of the sign. If the sign is
negative, then the positions are on the same diagonal and there is
a move between them.
If the sign is positive, then there is a diagonal move with a bounce that takes
$(y,x)$ to $(a_i,b_i)$. 
If $y-x$ is odd then either $(y-x-1)$ or $(y-x+1)$ is in $\left(D_n\cup S_n\right)$.
By the same argument, there either exists a diagonal move (possibly with a bounce)
from $(x,y-1)$ or from $(x-1,y)$ to some $(a_i,b_i)$.
\end{proof}

Recall that the minimal excludant of a proper subset $S$ of the natural numbers is denoted 
$\mathrm{mex}(S)$.
For a proper subset $S$ of the even natural numbers we define $\mathrm{mex}_2(S)$ as the minimal
excludant in $2\mathbb N$. For example, if $S=\{2,4,6,10,14\}$ then $\mathrm{mex}_2(S)=8$.

\begin{lemma}\label{lem2}
The coordinates of the $n+1$-th $P$-position satisfy $a_{n+1}=\mathrm{mex}(A_n\cup B_n)$
and $b_{n+1}-a_{n+1}=\mathrm{mex}_2(D_n\cup S_n)$.
\end{lemma}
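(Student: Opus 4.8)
The plan is to feed the characterisation of Lemma~\ref{lem1} into the standard mex computation, treating the first coordinate and the diagonal separately. Write $m=\mathrm{mex}(A_n\cup B_n)$ and $\delta=\mathrm{mex}_2(D_n\cup S_n)$; the goal is to show that $(m,m+\delta)$ is precisely the $P$-position $P_{n+1}\setminus P_n$, from which both formulas follow by reading off the coordinates.

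First I would pin down the first coordinate. By the inductive hypothesis $A_n\cup B_n$ contains $\{0,1,\dots,a_n\}$, so $m>a_n$ and every integer $x<m$ lies in $A_n\cup B_n$. Lemma~\ref{lem1} then shows that every $(x,y)$ with $x<m$ meets $A_n\cup B_n$ in a coordinate, hence has a move to $P_n$ and is an $N$-position. Thus any new $P$-position has first coordinate at least $m$, giving the lower bound $a_{n+1}\ge m$.

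Next I would treat the diagonal. Since $D_n\cup S_n$ is a set of even numbers containing every even number up to $\max(D_n)$, the value $\delta$ is even and satisfies $\delta>\max(D_n)$. If a position has difference $y-x<\delta$ then, whether $y-x$ is even (so $y-x\in D_n\cup S_n$) or odd (so the even number $y-x-1\in D_n\cup S_n$), the second alternative of Lemma~\ref{lem1} applies and the position is an $N$-position; hence the new $P$-position has difference at least $\delta$. Conversely, at difference exactly $\delta$ the set $\{\delta-1,\delta,\delta+1\}$ misses $D_n\cup S_n$, because $\delta\pm1$ are odd while $\delta\notin D_n\cup S_n$ by definition of $\mathrm{mex}_2$.

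It remains to check that $(m,m+\delta)$ survives the \emph{coordinate} test, i.e.\ that $m+\delta\notin A_n\cup B_n$, and I expect this to be the only real obstacle. Since $m+\delta>m>a_n=\max(A_n)$, the only danger is $m+\delta=b_j$ for some $j\le n$. I would rule this out by a size argument: the difference $d_j=b_j-a_j$ lies in $D_n$, so $d_j\le\max(D_n)<\delta$; but $m+\delta=b_j=a_j+d_j$ with $a_j\le a_n<m$ forces $d_j=(m-a_j)+\delta>\delta$, a contradiction. Hence $\{m,m+\delta\}\cap(A_n\cup B_n)=\emptyset$, and together with the diagonal computation Lemma~\ref{lem1} certifies that $(m,m+\delta)$ has no move to $P_n$, so it is a $P$-position. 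Finally, since any vertical move decreases the Manhattan distance, no column can contain two $P$-positions, so $(m,m+\delta)$ is forced to be the next entry $(a_{n+1},b_{n+1})$; reading off its coordinates yields $a_{n+1}=m$ and $b_{n+1}-a_{n+1}=\delta$, as claimed.
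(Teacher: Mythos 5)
Your proof is correct and follows the same route as the paper: feed the characterisation of Lemma~\ref{lem1} into the standard mex computation, handling the first coordinate and the difference $y-x$ separately. In fact you are somewhat more careful than the paper's own proof, which simply asserts that the minimal remaining $x$-coordinate and the minimal remaining difference are the two mex values — your size argument ruling out $m+\delta\in B_n$ (so that both minima are attained simultaneously at $(m,m+\delta)$) makes explicit a verification the paper leaves implicit.
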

\begin{proof}
To find the next $P$-position, remove all positions with a move to $P_n$. 
The remaining positions $(x,y)$ have empty intersections 
$\{x,y\}\cap (A_n\cup B_n)=\emptyset$ and $\{y-x-1,y-x,y-x+1\}\cap \left(D_n\cup S_n\right)=\emptyset$. 
We need to locate the remaining position with minimal Manhattan distance to the origin.
The minimal remaining $x$-coordinate is $\mathrm{mex}(A_n\cup B_n)$.
The minimal remaining difference $y-x$ is $\mathrm{mex}_2(D_n\cup S_n)$.
Note that the resulting $A_{n+1},B_{n+1},D_{n+1},S_{n+1}$ satisfy the inductive hypothesis.
\end{proof}

There is a remarkable connection between the $P$-positions for
Queen Dees and for $2$-Queen Dees.
\begin{table}[htbp]
\caption{\small{The first $P$-positions
in Corner the Queen Dee.
These are sequences A140100 and A140101 in the OEIS.
Observe that A140101-A140100=A140103, which is the first sequence
in Table~\ref{tbl2} above, and that A140101+A140100=A140104,
which is the second sequence.
It immediately follows that A140104-A140103=2A140100.
}\label{tbl3}}
\begin{tabular}{cccccccccccccccc}
\hline
1&3&4&6&7&9&10&12&14&15&17&18&20&21&23 \\
2&5&8&11&13&16&19&22&25&28&31&33&36&39&42 \\
\hline
\end{tabular}
\end{table}
The differences $b_n-a_n$ for the Queen Dee
produce the sequence $1,2,4,5,6,7,\ldots$ of $x$-coordinates
of $P$-positions of the $2$-Queen Dee.
Similarly, the sums $b_n+a_n$ produce
$3,8,12,17,20,25,\ldots$ of $y$-coordinates for the $2$-Queen Dee.
We can retrieve the $P$-positions of the $2$-Queen Dee from those
of the Queen Dee. Conversely, rewriting these equations, we
can retrieve the $P$-positions of the Queen Dee from those of the $2$-Queen Dee.
The differences $b_n-a_n$ of the $2$-Queen Dee produce
the sequence $2, 6, 8, 12, 14, 18, \ldots$ of twice the $x$-coordinates
of $P$-positions of the Queen Dee.
Similarly, the sums produce twice the $y$-coordinates.

We write $d_n=b_n-a_n$ for difference and $s_n=b_n+a_n$ for sum.
We reserve the Greek alphabet for the Queen Dee and Latin for the $2$-Queen Dee.

\begin{lemma}\label{lem3}
The following equalities hold $a_n=\delta_n, b_n=\sigma_n, d_n=2\alpha_n, s_n=2\beta_n$. 
\end{lemma}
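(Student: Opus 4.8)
The plan is to reduce the four identities to the two coordinate identities $a_n=\delta_n$ and $b_n=\sigma_n$, and then to prove these by a single induction that plays the two mex-recursions against each other. First observe that the difference and sum identities are algebraic consequences of the coordinate identities: if $a_n=\delta_n=\beta_n-\alpha_n$ and $b_n=\sigma_n=\beta_n+\alpha_n$, then $d_n=b_n-a_n=\sigma_n-\delta_n=2\alpha_n$ and $s_n=b_n+a_n=\sigma_n+\delta_n=2\beta_n$. So it suffices to establish $a_n=\delta_n$ and $b_n=\sigma_n$ for every $n$.

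The engine of the proof is the recursion of Lemma~\ref{lem2} for the $2$-Queen Dee, namely $a_{n+1}=\mathrm{mex}(A_n\cup B_n)$ and $d_{n+1}=\mathrm{mex}_2(D_n\cup S_n)$, together with its analogue for the Queen Dee. Since the Queen Dee is a $1$-Queen she has no off-diagonal step, so repeating the analysis of Lemmas~\ref{lem1} and~\ref{lem2} (with the $\pm1$ fringe removed and the ordinary $\mathrm{mex}$ in place of $\mathrm{mex}_2$) yields $\alpha_{n+1}=\mathrm{mex}(\mathcal A_n\cup\mathcal B_n)$ and $\delta_{n+1}=\mathrm{mex}(\mathcal D_n\cup\mathcal S_n)$, where $\mathcal A_n,\mathcal B_n,\mathcal D_n,\mathcal S_n$ are the Greek counterparts of the Latin sets. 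I would also record the elementary identity $\mathrm{mex}_2(2T)=2\,\mathrm{mex}(T)$ for any $T\subseteq\mathbb N$, which holds because $2k\notin 2T$ exactly when $k\notin T$; this is the bridge that converts the even mex of the $2$-Queen Dee into the ordinary mex of the Queen Dee.

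Now induct on $n$, with inductive hypothesis $a_i=\delta_i$ and $b_i=\sigma_i$ for all $i\le n$; the base case is the shared origin. The hypothesis gives the set equalities $A_n=\mathcal D_n$ and $B_n=\mathcal S_n$ directly, and $D_n=2\mathcal A_n$, $S_n=2\mathcal B_n$ via the algebraic consequences above. For the first coordinate, $a_{n+1}=\mathrm{mex}(A_n\cup B_n)=\mathrm{mex}(\mathcal D_n\cup\mathcal S_n)=\delta_{n+1}$, since the two mex arguments are the same set. For the second, $d_{n+1}=\mathrm{mex}_2(D_n\cup S_n)=\mathrm{mex}_2\bigl(2(\mathcal A_n\cup\mathcal B_n)\bigr)=2\,\mathrm{mex}(\mathcal A_n\cup\mathcal B_n)=2\alpha_{n+1}$ by the doubling identity, whence $b_{n+1}=a_{n+1}+d_{n+1}=\delta_{n+1}+2\alpha_{n+1}=(\beta_{n+1}-\alpha_{n+1})+2\alpha_{n+1}=\sigma_{n+1}$. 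This closes the induction.

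The routine parts are the algebra and the mex-doubling identity; the one step that needs care is the analogue of Lemmas~\ref{lem1}--\ref{lem2} for the Queen Dee, i.e.\ checking that her recursion really is governed by the ordinary $\mathrm{mex}$ of $\mathcal D_n\cup\mathcal S_n$ and that the structural inductive hypothesis (coordinate sets covering an initial segment, differences and sums covering an initial segment) is preserved at each step for both games simultaneously. Once that type-matching between $\mathrm{mex}$ and $\mathrm{mex}_2$ is pinned down, the rest is bookkeeping. The main obstacle is therefore conceptual alignment rather than computation: recognizing that doubling the diagonal spacing (passing from the Queen Dee to the $2$-Queen Dee) is exactly matched by the passage from $\mathrm{mex}$ to $\mathrm{mex}_2$, while the roles of coordinates and of differences and sums are swapped between the two games.
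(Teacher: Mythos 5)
Your proof is correct and follows essentially the same route as the paper: an induction that plays the two mex-recursions (coordinates vs.\ differences-and-sums) of the two games against each other, with the $\mathrm{mex}_2(2T)=2\,\mathrm{mex}(T)$ observation doing the conversion. The only difference is cosmetic: where you propose to re-derive the Queen Dee recursions $\alpha_{n+1}=\mathrm{mex}(\mathcal A_n\cup\mathcal B_n)$ and $\delta_{n+1}=\mathrm{mex}(\mathcal D_n\cup\mathcal S_n)$ by adapting Lemmas~\ref{lem1} and~\ref{lem2}, the paper simply cites them from~\cite{FR}.
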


\begin{proof}
We use the Greek notation $\Delta_n=\{\delta_i\colon i\leq n\}$
and $\Sigma_n=\{\sigma_i\colon i\leq n\}$.
We proved in \cite{FR} that $\delta_{n+1}=\mathrm{mex}(\Delta_n\cup\Sigma_n)$, which
by induction and by Lemma~\ref{lem2} is equal to $a_{n+1}$,
producing the first equality.

Unfortunately, the Greek capitals $\alpha$ and $\beta$ are indistinguishable
from the Latin capitals. Fortunately, by induction,
we can identify the first $n$ $\alpha$'s
with $D_n/2$ and the first $n$ $\beta$'s with $S_n/2$.  
We proved in \cite{FR} that $\alpha_{n+1}=\mathrm{mex}(D_n/2\cup S_n/2)$
and by Lemma~\ref{lem2} this implies that $\alpha_{n+1}=d_{n+1}/2$,
producing the third equality.

Moving on, $b_{n+1}=a_{n+1}+d_{n+1}=\delta_{n+1}+2\alpha_{n+1}=\alpha_{n+1}+\beta_{n+1}$
produces the second equality. Finally, $s_{n+1}=a_{n+1}+b_{n+1}=\delta_{n+1}+\sigma_{n+1}=2\beta_{n+1}$
produces the fourth equality.
\end{proof}

We conjectured in~\cite{FR} that the $P$-positions of the $2$-Queen Dee
can be derived from~$\mathbf t$. Our next result settles this conjecture.

\begin{theorem}\label{thm2}
The $P$-positions $(a_n,b_n)$ in Corner the $2$-Queen Dee are coded by $\mathbf t_b$,
in the sense that $a_n$ corresponds to the location of the $n$-th $a$ and $b_n$
corresponds to the location of the $n$-th $c$.
\end{theorem}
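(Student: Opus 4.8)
The plan is to transport the claim to the Queen Dee through Lemma~\ref{lem3} and then verify it by reading the relevant letter locations directly off the self-similar identity $\mathbf{t}=\tau(\mathbf{t})$. By Lemma~\ref{lem3} we have $a_n=\beta_n-\alpha_n$ and $b_n=\beta_n+\alpha_n$, and, as recalled at the start of this section, the Queen Dee is coded by $\mathbf{t}_c$: the number $\alpha_n$ is the location of the $n$-th $a$ in $\mathbf{t}_c$ and $\beta_n$ is the location of the $n$-th $b$. It therefore suffices to prove the two identities
\begin{align*}
\text{(location of the $n$-th $a$ in $\mathbf{t}_b$)}&=\beta_n-\alpha_n,\\
\text{(location of the $n$-th $c$ in $\mathbf{t}_b$)}&=\beta_n+\alpha_n,
\end{align*}
after which nothing about games remains: the whole problem becomes one of counting letters in prefixes of the single word $\mathbf{t}$.

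To carry this out I would fix the bookkeeping as follows. Let $A(n)$ denote the position in $\mathbf{t}$ of its $n$-th $a$, and let $N_x(k)$ count the occurrences of the letter $x$ in the length-$k$ prefix $\mathbf{t}[1..k]$, so that $N_a(k)+N_b(k)+N_c(k)=k$. The engine is the block decomposition $\mathbf{t}=\tau(t_1)\tau(t_2)\cdots$ coming from $\mathbf{t}=\tau(\mathbf{t})$. Each image block $\tau(a)=ab$, $\tau(b)=ac$, $\tau(c)=a$ begins with exactly one $a$ and contains no other $a$, so the $n$-th $a$ of $\mathbf{t}$ opens the $n$-th block; consequently the prefix of $\mathbf{t}$ ending just before its $n$-th $a$ is exactly $\tau(\mathbf{t}[1..n-1])$. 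Reading lengths (every block has length two except the blocks $\tau(c)$, which have length one) gives
\[
A(n)=2n-1-N_c(n-1),
\]
and reading contents (a $c$ occurs in $\tau(x)$ precisely when $x=b$, and a $b$ precisely when $x=a$) gives the two structural identities
\[
N_c(A(n)-1)=N_b(n-1),\qquad N_b(A(n)-1)=N_a(n-1).
\]
The same decomposition locates the $b$'s: applying $\tau$ to the $n$-th $a$ of $\mathbf{t}$ spawns the $n$-th $b$ as the second letter of its block, which pins that $b$ down in terms of $A(n)$ and $N_b(n-1)$.

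With these in hand every location becomes a prefix count. Deleting the $c$'s and tracking the shift gives $\alpha_n=A(n)-N_b(n-1)$ and $\beta_n=A(n)+n$, while deleting the $b$'s gives the location of the $n$-th $a$ in $\mathbf{t}_b$ as $A(n)-N_a(n-1)$ and the location of the $n$-th $c$ in $\mathbf{t}_b$ in terms of $A(n)$ and $N_b(n-1)$. Substituting $A(n)=2n-1-N_c(n-1)$ and using $N_a(n-1)+N_b(n-1)+N_c(n-1)=n-1$, the first target identity reduces to $\beta_n-\alpha_n=n+N_b(n-1)=A(n)-N_a(n-1)$ and the second to $\beta_n+\alpha_n=2A(n)+n-N_b(n-1)$; both then follow at once. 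I expect the main obstacle to be organizational rather than conceptual: one must keep the three counting functions $N_a,N_b,N_c$ and the index function $A$ mutually consistent across two different deletions, and the only genuinely structural input is the pair of identities $N_c(A(n)-1)=N_b(n-1)$ and $N_b(A(n)-1)=N_a(n-1)$, which rest on the single observation that the prefix of $\mathbf{t}$ ending just before its $n$-th $a$ is a whole number of $\tau$-blocks. Once that observation is isolated, the remainder is linear arithmetic.
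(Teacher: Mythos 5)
Your proposal is correct, and it takes a genuinely different route from the paper. The paper also passes through Lemma~\ref{lem3} (so both proofs reduce the game theory to $a_n=\delta_n$, $b_n=\sigma_n$), but from there it invokes Corollary~1 of~\cite{FR} — the fact that $\delta_{n+1}-\delta_n$ equals $2$ exactly when the $n$-th letter of $\mathbf t$ is $b$ and $1$ otherwise — to realise $\delta_n$ as the positions of the $a$'s under the coding $\kappa\colon a\mapsto a,\ b\mapsto ac,\ c\mapsto a$, and then finishes in one line via the factorisation $\kappa=\lambda\circ\tau$ and the shift-invariance $\tau(\mathbf t)=\mathbf t$, which give $\kappa(\mathbf t)=\lambda(\mathbf t)=\mathbf t_b$. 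You instead use the previously established coding of the Queen Dee by $\mathbf t_c$ and turn the theorem into two letter-position identities inside the single word $\mathbf t$, proved by prefix counting from the block decomposition $\mathbf t=\tau(t_1)\tau(t_2)\cdots$. I checked your identities: $A(n)=2n-1-N_c(n-1)$, the structural pair $N_c(A(n)-1)=N_b(n-1)$ and $N_b(A(n)-1)=N_a(n-1)$, the consequences $\alpha_n=A(n)-N_b(n-1)$ and $\beta_n=A(n)+n$, and both final reductions are correct (for the second one you also need that the $n$-th $c$ of $\mathbf t$ sits at $A(B(n))+1$ with $B(n)=A(A(n))+1=2A(n)-N_b(n-1)$, plus the trivial fact $N_a(A(m))=m$; this is one more iteration of the same block argument and is within what you outline). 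What each approach buys: the paper's proof is three lines but leans on the first-difference characterisation of $\delta_n$ from~\cite{FR} and is terse about the $c$-positions/$b_n$ half of the statement; yours is longer and more bookkeeping-heavy, but it needs only the statement that $\mathbf t_c$ codes the Queen Dee, it treats the $a$-half and the $c$-half symmetrically and explicitly, and it isolates the reusable combinatorial content (the prefix before the $n$-th $a$ is a whole number of $\tau$-blocks) that would transfer to other deletions and other morphisms.
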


\begin{proof} Again this follows from results in \cite{FR}. Corollary~1 in that
paper states that $\delta_{n+1}-\delta_n=1$ unless the $n$-th letter
in $\mathbf t$ is equal to $b$, in which case $\delta_{n+1}-\delta_n=2$.
Therefore, we can retrieve $\delta_n$ as the position of the $n$-th $a$ if
we code $\mathbf t$ by $a\mapsto a,\ b\mapsto ac,\ c\mapsto a$.
We denote this coding by $\kappa$. In particular, $\delta_n$ corresponds to
the $n$-th $a$ in $\kappa(\mathbf t)$.
Note that $\kappa$ is equal to $\lambda\circ\tau$ where $\lambda$ is the coding
$a\mapsto a,\ b\mapsto \epsilon,\ c\mapsto c$.
Now the result follows from
\[\mathbf t_b=\lambda(\mathbf t)=\lambda(\lim_{n\to\infty}\tau^{n+1}(a))=\kappa(\lim_{n\to\infty}\tau^n(a))=\kappa(\mathbf t)\]
and the fact that by Lemma~\ref{lem3} the sequences $\delta_n$ and $a_n$ are identical.
\end{proof}

One may wonder what happens if we delete all $a$'s from $\mathbf t$. It turns out that
$\mathbf t_a$ codes the same table as $\mathbf t_c$. 

\section{Morphic games}

The $P$-positions in Corner the Lady can be derived from the $a$'s and
the $b$'s in the Fibonacci word, which is the fixed point of the morphism
$a\mapsto ab, \ b\mapsto a$. The fixed point of the morphism 
$a\mapsto a^kb,\ b\mapsto a$ produces the $P$-positions for Holladay's $k$-Queen
(here $a^k$ denotes the block of $k$ $a$'s). The period
doubling morphism $a\mapsto ab,\ b\mapsto aa$ produces the $P$-positions
for the Queen Bee. 
This connection between morphisms and combinatorial games was first 
studied by Duch\^ene and Rigo~\cite{DR}, see also~\cite{R}. A game is \emph{morphic} if its
$P$-positions can be retrieved from a 
recoding of a fixed point of a morphism.
The $k$-Queen, the Queen Bee, and the $1$ and $2$-Queen Dee all are morphic Queens.
Are there more?

Not all Bouncing Queens are morphic.	
In an earlier version of \cite{FR} two authors of this paper conjectured that $3$
and $4$-Queen Dees are morphic.
However, numerical results of the other author~\cite{O} indicate otherwise.

Not all modifications of the substitutions above produce morphic Queens. 
The morphisms $a\mapsto ab,\ b\mapsto a$ and $a\mapsto ab,\ b\mapsto aa$,
which come with the Queen and the Queen Bee, belong to the family
$a\mapsto ab,\ b\mapsto a^k$ that is studied in~\cite{BGM} (in an equivalent form).
Next in line is $a\mapsto ab,\ b\mapsto aaa$. It produces the positions
$(1,2),\ (3,7),\ (4,9),\ (5,11),\ (6,13)$. 
Now consider the $N$-position $(3,6)$.
It must be possible to move to a $P$-position from there, which has
to be either $(0,0)$ or $(1,2)$. In the first case, we need a move over
$(3,6)$, which is the difference between the $P$-positions $(3,7)$ and $(6,13)$.
In the second case, we need a move over $(2,4)$, which is the difference
between the $P$-positions $(3,7)$ and $(5,11)$.
Hence, there is no sensible way to define a Queen in this case for which legal moves are position-independent,
and so it appears that $a\mapsto ab,\ b\mapsto aaa$ does not produce a morphic Queen.

Some modified substitutions do produce morphic Queens.
The morphism $a\mapsto aab,\ b\mapsto aa$ arises from the $2$-Queen's morphism, 
just like the Queen Bee's morphism arises from the Queen's.
It produces the $P$-positions in~Table~\ref{tbl4}.
\begin{table}[htbp]
\caption{\small{The $P$-positions of the $(2,1)$-Queen
are
produced by the morphism $a\mapsto aab,\ b\mapsto aa$.
The positions $(a_n,b_n)$ satisfy $b_n=2a_n+n$.
These are sequences A026367 and A026368 in the OEIS.
}\label{tbl4}}
\begin{tabular}{cccccccccccccccc}
%\rowcolor[gray]{.8}$\Delta$&
%1&2&4&5&6&7&9&10&11&13&14&15&16&18&19 \\
\hline
1&2&4& 5& 7& 8 &9 &10&12&13&15&16&17&18&20 \\
3&6&11&14&19&22&25&28&33&36&41&44&47&50&55 \\
\hline
%\rowcolor[gray]{.8}$\Sigma$&3&8&12&17&20&25&29&34&39&43&48&51&56&60&65
\end{tabular}
\end{table}
These are the $P$-positions of $(2,1)$-Wythoff, a modification of
Corner the Lady that was introduced by Fraenkel~\cite{Fheap}.
In this game, the Queen widens the scope of her diagonal. 
An ordinary Queen makes diagonal moves
$(x,y)$ with $x$ equal to $y$. 
In $(2,1)$-Wythoff, we have a $(2,1)$-Queen who makes
widened diagonal moves
$x\leq y\leq 2x$,  
or symmetrically, $y\leq x\leq 2y$.
Consecutive positions in Table~\ref{tbl4} either differ by
$\left[\begin{smallmatrix} 1\\3\end{smallmatrix}\right]$ or   
$\left[\begin{smallmatrix} 2\\5\end{smallmatrix}\right]$, and contain
each natural number once and only once.
It is not hard to verify that for $k>1$ such tables with differences
$\left[\begin{smallmatrix}1\\{k+1}\end{smallmatrix}\right]$ or   
$\left[\begin{smallmatrix}2\\{j+k+1}\end{smallmatrix}\right]$
correspond to the morphisms $a\mapsto a^kb,\ b\mapsto a^j$. 
These tables contain the $P$-positions of Fraenkel's
$(j,1+k-j)$-Wythoff games (if $k\geq j$), which can be interpreted as versions of Corner
the Lady for Queens with widened scopes.
The first parameter $j$ represents the scope, the second parameter
represents how far the Queen can stroll off the diagonal.
In particular, a $(1,k)$-Queen is a $k$-Queen.

An ordinary Queen produces complementary sequences $b_n=a_n+n$. Holladay's
$k$-Queen produces $b_n=a_n+kn$. What can be said for complementary sequences
that satisfy $b_n=a_n+f(n)$ for some function~$f$? This problem has been
studied by Kimberling~\cite{K} and some of these sequences have been entered
in the OEIS (see A184117). The morphism $a\mapsto a^{k-j}ba^j,\ b\mapsto a$
produces $b_n=a_n+kn-j$ for $k>j\geq 0$. These are $P$-positions. Consider
a $k$-Queen that is allowed to stroll, but with a restriction. If her initial
position $(x,y)$ satisfies $|x-y|>j$ then she cannot stroll off-diagonal 
to a position $(u,v)$ with $|u-v|\leq j$. The positions $|u-v|\leq j$
form a special part of the board that can only be entered by a horizontal
or vertical move. The $P$-positions in this limited game are produced
by~$a\mapsto a^{k-j}ba^j,\ b\mapsto a$. 

The interesting quality of the Queen Dee is that her $P$-positions are coded
by the fixed point of 
the three letter morphism $a\mapsto ab,\ b\to ac,\ c\mapsto a$. 
This morphism belongs
to a well-studied family of morphisms, in which the $4$-bonacci
morphism 
$a\mapsto ab,\ b\mapsto ac,\ c\mapsto ad,\ d\mapsto a$
is next. In their original paper on morphic games, Duch\^ene and Rigo
asked if it is possible to define a combinatorial game with $P$-positions
corresponding to the $4$-bonacci morphism. We can specify this. 
Does there exist a morphic Queen with $P$-positions
that are coded by the fixed point of this morphism? 	

\section{The coordinate sums of the \textit{P}-positions}

One of the remarkable properties of Corner the Lady is that consecutive Fibonacci numbers
$(F_i, F_{i+1})$ occur as $P$-positions in this game (with $i$ odd and starting the count
from $F_1$=1 and $F_2=2$). Martin Gardner singled out this property in one of his Scientific
American columns~\cite{G}. It is a consequence of the equation $a_{b_n}=a_n+b_n$ for
the $P$-positions of Corner the Lady.
This equation does not hold for Corner the Empress, but it holds approximately, as can be 
seen in Table~\ref{tbl3}.
Its first column adds up to $1+2=3$, which is indeed equal to the first entry in the second column. 
However, the second column sum $3+5=8$ is not equal to the first entry of the fifth column.
It is one off, $7$. Further inspection indicates that $a_n+b_n$ appears to be either equal to the $a$ entry in the $b_n$-th column, or is one off.
In other words
\begin{equation}\label{conj1}
\forall\,n\ \left( a_{b_n}=a_n+b_n\ \vee\
a_{b_n}=a_n+b_n-1\right). 
\end{equation}
In \cite{FR} we conjectured that this equation holds for Corner the Queen Dee, and Jeffrey Shallit used \texttt{Walnut} to prove it in~\cite{S2} by the following one-liner:

\noindent
\begin{center}
\texttt{eval\ fokkink\ "?msd\_trib\ An,a,b,ab\ (n>=1\ \&\ \$xaut(n,a)\ \&\ \$yaut(n,b)\ \&\ }
\texttt{\$xaut(b,ab))\ =>\ (ab=a+b|ab=a+b-1)":}
\end{center}

\noindent
\texttt{Walnut} code is very transparent. Even if you are not familiar with the language, you can understand this statement. The final expression
$\mathtt{(ab=a+b\ |\ ab=a+b-1)}$ contains the conjecture. The sequences $\mathtt a$ and $\mathtt b$ are defined by means of
$\mathtt{\$xaut(n,a)}$ and $\mathtt{\$yaut(n,b)}$. A sequence is a function from the integers to itself and the two functions for $a_n$ and $b_n$ are $\mathtt{xaut}$ and $\mathtt{yaut}$. Actually, \texttt{Walnut} does not write $\mathtt {a=xaut(n)}$, or something like that, but treats $\mathtt n$ and $\mathtt a$ as simultaneous inputs. This is why $\mathtt n$ and $\mathtt a$ are called synchronized sequences. Since \texttt{Walnut} can only handle bitstrings, it needs a numeration system to deal with natural numbers. The default is the binary system, but it turns out that $\mathtt n$ and $\mathtt a$ are not synchronized in binary. The proper numeration system is the Tribonacci system, which is imported by the command $\mathtt{msd\_trib}$. The instruction \texttt{eval} commands \texttt{Walnut} to evaluate this statement, named after one of the authors, for all $n, a, b, ab$ with $n\geq 1$. The statement is evaluated as \texttt{TRUE}.

Table~\ref{tbl2} appears to also satisfy equation~\ref{conj1}. To apply \texttt{Walnut} we need to implement the functions for the sequences $a_n$ and $b_n$ in this table. Fortunately, they can easily be derived from the synchronized sequences in \cite[p 286] {S1}, which have been implemented in \texttt{Walnut} to reprove a theorem from~\cite{DR}. We have that $a_n=A(n)-E(A(n))$ and $b_n=C(n)-E(C(n))$. The sequences $A,C,E$ are implemented as \texttt{triba, tribc, tribe}, which we can use to define
\medbreak
\noindent
\texttt{def\ uaut\ "?msd{\textunderscore}trib\ (s=0\&t=0)\ |\ Ex,xy\ \$triba(s,x)\&\$tribe(x,xy)\&t=x-xy":}
\noindent
\texttt{def vaut\ "?msd{\textunderscore}trib\ (s=0\&t=0)\ |\ Ex,xy\ \$tribc(s,x)\&\$tribe(x,xy)\&t=x-xy":}
\medbreak
\noindent
Now we can copy-paste the proof in \cite{S2} to show that the columns in Table~\ref{tbl2} do indeed satisfy equation~\ref{conj1}.
\begin{theorem}
The $P$-positions $(a_n,b_n)$ of the 2-Queen Dee satisfy Equation~\ref{conj1}.
\end{theorem}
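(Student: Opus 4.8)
The plan is to reduce the statement to a single \texttt{Walnut} query, mirroring Shallit's one-liner for the Queen Dee \cite{S2} but with the two synchronized sequences replaced by the ones appropriate to the $2$-Queen Dee. By Theorem~\ref{thm2} the coordinate $a_n$ is the location of the $n$-th $a$, and $b_n$ the location of the $n$-th $c$, in the word $\mathbf t_b$ obtained by deleting every $b$ from the Tribonacci word. Writing $A(n),C(n)$ for the positions of the $n$-th $a$ and the $n$-th $c$ in the full word $\mathbf t$ and $E(k)$ for the number of $b$'s among its first $k$ letters, deleting the $b$'s shifts each surviving letter left by the number of deleted letters preceding it, so $a_n=A(n)-E(A(n))$ and $b_n=C(n)-E(C(n))$. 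These are precisely the predicates \texttt{uaut} and \texttt{vaut} built above from the synchronized automata \texttt{triba}, \texttt{tribc}, \texttt{tribe} of \cite[p.~286]{S1}.

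First I would confirm that \texttt{uaut} and \texttt{vaut} are genuine synchronized sequences in the \texttt{msd\_trib} numeration, so that the graphs $\{(n,a_n)\}$ and $\{(n,b_n)\}$ are recognized by finite automata; this is inherited from the synchronization of \texttt{triba}, \texttt{tribc}, \texttt{tribe} together with the recognizability of addition and subtraction in the Tribonacci system. With these automata in hand, Equation~\ref{conj1} becomes the first-order sentence

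\begin{center}
\texttt{eval\ fokkink2\ "?msd\_trib\ An,a,b,ab\ (n>=1\ \&\ \$uaut(n,a)\ \&\ \$vaut(n,b)\ \&\ }\\
\texttt{\$uaut(b,ab))\ =>\ (ab=a+b|ab=a+b-1)":}
\end{center}

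\noindent in which $\mathtt{\$uaut(n,a)}$ asserts $a=a_n$, $\mathtt{\$vaut(n,b)}$ asserts $b=b_n$, and $\mathtt{\$uaut(b,ab)}$ asserts $\mathit{ab}=a_{b_n}$, while the consequent is exactly the disjunction of Equation~\ref{conj1}. Running \texttt{eval} then decides the sentence; I expect the output to be \texttt{TRUE}, which would complete the proof.

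The genuinely nontrivial step is not the final automatic check---that is immediate once the automata are assembled---but the correctness of the synchronized representation $a_n=A(n)-E(A(n))$, $b_n=C(n)-E(C(n))$. The subtle point is the index bookkeeping in passing from $\mathbf t$ to $\mathbf t_b$: one must verify that $E$ indeed counts the deleted $b$'s up to the position of the $n$-th surviving $a$ (resp.\ $c$), so that the left-shift is accounted for exactly and the resulting synchronized sequences agree with the $a_n,b_n$ supplied by Theorem~\ref{thm2} and Lemma~\ref{lem3}. Once this identification is pinned down, the remainder is a mechanical adaptation of the proof in \cite{S2}.
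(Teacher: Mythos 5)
Your proposal is correct and follows essentially the same route as the paper: it defines the synchronized predicates \texttt{uaut} and \texttt{vaut} via $a_n=A(n)-E(A(n))$, $b_n=C(n)-E(C(n))$ from \texttt{triba}, \texttt{tribc}, \texttt{tribe}, and then runs exactly the same \texttt{Walnut} one-liner (the paper names it \texttt{table2}), which indeed returns \texttt{TRUE}.
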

\begin{proof}
\noindent
\begin{center}
\texttt{eval\ table2\ "?msd\_trib\ An,a,b,ab\ (n>=1\ \&\ \$uaut(n,a)\ \&\ \$vaut(n,b)\ \&\ }
\texttt{\$uaut(b,ab))\ =>\ (ab=a+b|ab=a+b-1)":}\end{center}
\noindent
\texttt{Walnut} returns \texttt{TRUE}. It befits a modern Monarch to have her footsteps traced by an electronic slave.
\end{proof}

Numerical experiments indicate that $a_{b_n}-a_n-b_n$ is unbounded for the Queen Bee. We cannot use \texttt{Walnut} to prove or disprove this statement. Narad Rampersad and Manon Stipulanti~\cite{RS} proved that the sequences $a_n$ and $b_n$ are not regular in this case, which implies that they cannot be synchronized. 
\begin{figure}[htbp]
	\centering
		\includegraphics[width=0.5\textwidth]{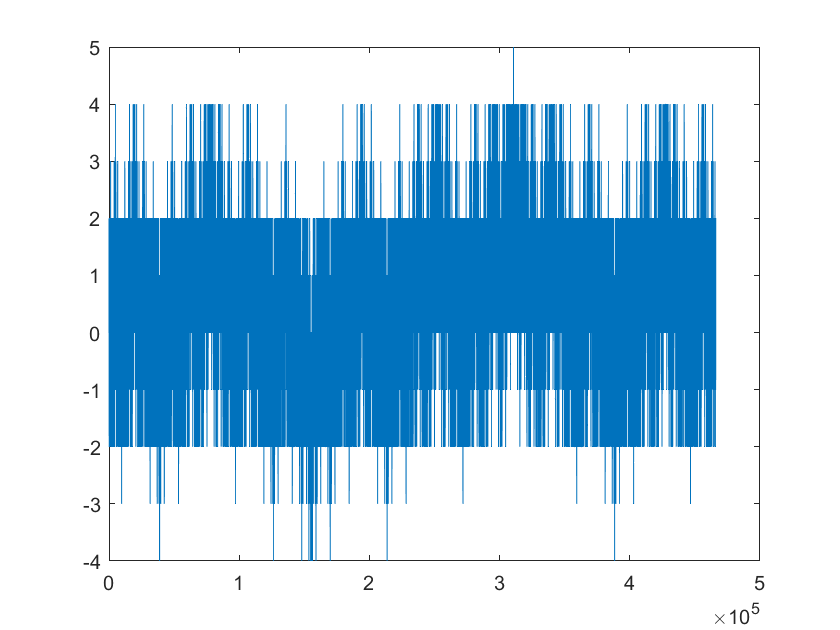}
	\caption{The difference $a_{b_n}-a_n-b_n$ plotted against $n$ for the $P$-position in Corner the Queen Bee. The first difference that is equal to $5$ occurs at $n=310691$.}
	\label{fig3}
\end{figure}
\begin{proposition}\label{prop1}
Let $[x]$ denote the nearest integer to $x$, rounded up if the fractional part is $\frac 12$ and let \[f(x)=\sum_{j=0}^\infty \left[\frac x{2^{2j+1}}\right].\] Then $n=f(a_n)$ for the $n$-th $P$-position in Corner the Queen Bee.
\end{proposition}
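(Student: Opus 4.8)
The plan is to show that $f$ is nothing but the counting function of the \emph{vile} numbers. Write $V(N)$ for the number of integers in $\{1,\dots,N\}$ whose binary expansion ends in an even number of zeros, i.e.\ the number of $a_i$ with $a_i\le N$. Since the $a_n$ are listed in increasing order, $a_n$ is the $n$-th vile number, so $V(a_n)=n$, and it therefore suffices to prove the stronger identity $f(N)=V(N)$ for every $N\ge 0$. First I would record a recursion for $V$. Among $1,\dots,N$ the $\lceil N/2\rceil$ odd numbers are all vile, while an even number $2m\le N$ is vile exactly when $m$ is dopey (ends in an odd number of zeros); the count of such $m\le\lfloor N/2\rfloor$ is $\lfloor N/2\rfloor-V(\lfloor N/2\rfloor)$. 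Adding these and using $\lceil N/2\rceil+\lfloor N/2\rfloor=N$ gives
\[
V(N)=N-V(\lfloor N/2\rfloor),\qquad V(0)=0.
\]

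The core of the argument is to prove that $f$ obeys the very same recursion. Using the round-half-up convention $[x]=\lfloor x+\tfrac12\rfloor$, I would first establish the classical telescoping identity
\[
g(N):=\sum_{k\ge 1}\Bigl[\tfrac{N}{2^k}\Bigr]=\sum_{k\ge 1}\Bigl\lfloor\tfrac{N+2^{k-1}}{2^k}\Bigr\rfloor=N,
\]
which follows from the term-by-term formula $\lfloor (N+2^{k-1})/2^k\rfloor=\lfloor N/2^{k-1}\rfloor-\lfloor N/2^k\rfloor$ (proved by writing $N=2^kQ+R$ and splitting on whether $R\ge 2^{k-1}$) and then telescoping, since $\lfloor N/2^k\rfloor\to 0$. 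Next I would split $g(N)$ into its odd- and even-indexed terms: the odd-index part is exactly $f(N)=\sum_{j\ge 0}[N/2^{2j+1}]$, and I claim the even-index part equals $f(\lfloor N/2\rfloor)$. This rests on the composition lemma $[\lfloor N/2\rfloor/m]=[N/(2m)]$, valid whenever $m$ is even; applying it with $m=2^{2j+1}$ rewrites $f(\lfloor N/2\rfloor)=\sum_{j\ge 0}[\lfloor N/2\rfloor/2^{2j+1}]$ as $\sum_{j\ge 0}[N/2^{2j+2}]$, which is precisely the even-index part of $g$. Together these give $f(N)+f(\lfloor N/2\rfloor)=g(N)=N$, that is $f(N)=N-f(\lfloor N/2\rfloor)$ with $f(0)=0$.

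With both $f$ and $V$ satisfying $\Phi(N)=N-\Phi(\lfloor N/2\rfloor)$ and $\Phi(0)=0$, and since $\lfloor N/2\rfloor<N$ for $N\ge 1$, a straightforward strong induction yields $f=V$ on all of $\mathbb N$; in particular $f(a_n)=V(a_n)=n$, as desired. I expect the only genuinely delicate point to be the composition lemma $[\lfloor N/2\rfloor/m]=[N/(2m)]$: nearest-integer rounding does not compose as obligingly as the floor, and the identity can fail for odd $m$. The saving grace is that here the relevant divisors $m=2^{2j+1}$ are always even, and the parity computation (write $N=2q+r$ with $r\in\{0,1\}$ and compare $\lfloor q/m+\tfrac12\rfloor$ with $\lfloor q/m+r/(2m)+\tfrac12\rfloor$) shows the two roundings agree exactly in that case.
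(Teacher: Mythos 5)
Your proof is correct, but it takes a genuinely different route from the paper's. You prove the identity $f(N)=V(N)$ (with $V$ the counting function of the vile numbers) by showing that both sides satisfy the recursion $\Phi(N)=N-\Phi(\lfloor N/2\rfloor)$, the $f$-side resting on the telescoping identity $\sum_{k\ge 1}[N/2^k]=N$ together with the composition lemma $[\lfloor N/2\rfloor/m]=[N/(2m)]$ for even $m$ (your parity check of that lemma is right, and it is indeed the delicate point --- it fails for odd $m$). The paper instead counts directly: a number has exactly $2j$ trailing zeros if and only if it is congruent to $2^{2j}$ modulo $2^{2j+1}$, these residue classes partition the vile numbers, and the class $2^{2j}\bmod 2^{2j+1}$ contains exactly $\bigl\lfloor (N+2^{2j})/2^{2j+1}\bigr\rfloor=[N/2^{2j+1}]$ elements up to $N$; summing over $j$ gives $V(N)=f(N)$ in one stroke. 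The paper's argument is shorter and explains \emph{why} the round-half-up convention appears (it is the natural counting formula for that residue class), whereas your recursive argument is more self-contained, makes the structural relation between vile and dopey numbers explicit, and generalises more readily to other divide-and-round recursions; both establish the stronger statement $f(N)=V(N)$ for all $N$, not just $N=a_n$.
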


\begin{proof}
The sequence $a_n$ consists of all numbers that are $4^{j}\mod 2.4^{j}$ for $j=0,1,\ldots$,
since these are the numbers that have a suffix of one $1$ followed by $2j$ zeros. 
The residue class $2^{2j}\mod 2^{2j+1}$ contains $\left\lfloor \frac {a_n+2^{2j}}{2^{2j+1}}\right\rfloor$ elements up to $a_n$. Therefore, $a_n$ is the $n$-th element with 
\[
n=\sum_{j=0}^\infty \left\lfloor\frac {a_n+2^{2j}}{2^{2j+1}}\right\rfloor.
\]
Now notice that $[x]=\left\lfloor x+\frac 12\right\rfloor$.
\end{proof}

We are interested in $a_{b_n}-a_n-b_n$, which for the Queen Bee is equal to $a_{2a_n}-3a_n$.
Now $a_{2a_n}$ has index $2a_n$ while $3a_n$ has index $f(3a_n)$ by Proposition~\ref{prop1} 
(both sequences $a_n$ and $b_n$ are invariant under multiplication by $3$). 
To prove that $a_{2a_n}-3a_n$ is unbounded, we need to show that $2a_n-f(3a_n)$
is unbounded. It is
equal to
\[
2a_n-\sum_{j=0}^\infty \left[\frac {3a_n}{2^{2j+1}}\right].
\]
Note that this is equal to zero if we do not round the fractions $\frac {3a_n}{2^{2j+1}}$.
In other words, the unboundedness of $a_{b_n}-a_n-b_n$ is equivalent to the unboundedness
of
\begin{equation}\label{eq2}
\sum_{j=0}^\infty \left\{\frac {3a_n}{2^{2j+1}}\right\},
\end{equation}
where $\left\{x\right\}$ denotes the distance to the nearest integer $x-[x]$ (beware, it
is not the fractional part $x-\lfloor x\rfloor$!).
\begin{theorem}
The difference $a_{b_n}-a_n-b_n$ is unbounded for the $P$-positions of the  Queen Bee.
\end{theorem}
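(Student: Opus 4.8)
The plan is to build on the reduction just established: by the equivalence preceding the statement, it suffices to exhibit an explicit family of vile numbers $a$ --- these are exactly the values taken by the sequence $a_n$ --- along which the sum in~(\ref{eq2}) is unbounded. Write the binary expansion of $3a$ as $c_0c_1c_2\cdots$, with $c_0$ the least significant bit. The term $\{3a/2^{2j+1}\}$ depends only on the residue $r_j := 3a \bmod 2^{2j+1}$, that is, on the bits $c_0,\dots,c_{2j}$; concretely $r_j/2^{2j+1}$ is the binary fraction $0.c_{2j}c_{2j-1}\cdots c_0$, and its signed distance to the nearest integer is $r_j/2^{2j+1}$ itself whenever this quantity lies in $(0,\tfrac12)$. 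So each summand is a definite positive amount exactly when the reversed low-order block of $3a$ codes a fraction safely below $\tfrac12$, and the idea is to force the digits of $3a$ to do this for a long run of consecutive~$j$.

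Concretely, I would switch on the bits of $3a$ at position $0$ and at every odd position up to $2M-1$:
\[
3a = 1 + \sum_{i=1}^{M} 2^{2i-1} = 1 + \frac{2(4^{M}-1)}{3}, \qquad\text{equivalently}\qquad a = \frac{2\cdot 4^{M} + 1}{9}.
\]
The first task is to confirm this is a genuine vile number. Since $4$ has order $3$ modulo $9$, we have $9 \mid 2\cdot 4^{M}+1$ exactly when $M \equiv 1 \pmod 3$, and for such $M$ the number $a$ is an integer; as $2\cdot 4^{M}+1$ is odd, so is $a$, and an odd number has no trailing zeros, hence an even number of them, and is therefore vile. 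Thus each such $a$ occurs as some $a_n$.

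With this choice, for $1 \le j \le M$ the residue is $r_j = 1 + 2(4^{j}-1)/3$, and a one-line geometric summation gives
\[
\frac{r_j}{2^{2j+1}} = 2^{-(2j+1)} + \frac13\bigl(1 - 4^{-j}\bigr) \in \Bigl(\tfrac13,\tfrac38\Bigr] \subset \Bigl(0,\tfrac12\Bigr),
\]
so each of these $M$ terms contributes more than $\tfrac13$. The only exceptional index is $j=0$, where $3a$ is odd, so $r_0 = 1$, the fraction $r_0/2 = \tfrac12$ rounds up, and the contribution is $-\tfrac12$; the terms with $j>M$ are all positive. Hence the sum in~(\ref{eq2}) exceeds $-\tfrac12 + M/3$, which tends to infinity as $M \to \infty$ through $M \equiv 1 \pmod 3$. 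By the equivalence recalled above, $a_{b_n}-a_n-b_n$ is unbounded, consistent with Figure~\ref{fig3}.

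The one genuinely delicate step is the diagnostic one: recognising that the summands are controlled by the reversed low-order binary digits of $3a$, and that an almost-alternating digit pattern there keeps every term uniformly bounded away from $0$. Once one decides to impose that pattern on $3a$ rather than on $a$ itself, the requirement that $a$ be vile forces $3a$ to be odd --- which is precisely why the lowest bit must be switched on, at the cost of the single corrective term at $j=0$ --- while the divisibility condition $9 \mid 2\cdot 4^{M}+1$ and the evaluation of $r_j/2^{2j+1}$ are routine.
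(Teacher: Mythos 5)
Your proof is correct, and it rests on the same two pillars as the paper's own argument: the reduction (carried out just before the theorem) of the statement to the unboundedness of the sum in Equation~\ref{eq2}, and the observation that one can force many terms of that sum to contribute roughly $\frac{1}{3}$ each by controlling the binary expansion of $3a$. Where you differ is in execution. The paper introduces $h(x)=\sum_{j\ge 0}\{x/4^{j}\}$, notes that $h(4^{n})=\frac{1}{3}$ and that $h$ is approximately additive on powers of $4$ with sufficiently large gaps, and then takes $x$ to be a sum of $k$ such powers to get $h(x)\approx k/3$; you instead write down the explicit one-parameter family $a=(2\cdot 4^{M}+1)/9$ with $M\equiv 1\pmod 3$ and evaluate every term of the sum exactly, obtaining the clean lower bound $M/3-\frac{1}{2}$. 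Your version buys three things: it dispenses with the ``sufficiently large gaps'' bookkeeping, it gives an explicit quantitative bound, and the membership of your witnesses in the sequence $(a_n)$ is immediate because $a$ is odd and hence vile. That last point deserves emphasis: the paper's witnesses $a=2x/3$ with $x=4^{n_1}+4^{n_1+n_2}+\cdots$ have $2$-adic valuation $2n_1+1$, which is odd, so as written they are dopey rather than vile and the construction needs a small repair that your choice sidesteps entirely. What the paper's softer argument buys in exchange is flexibility --- any widely separated configuration of powers of $4$ does the job, and the function $h$ is exhibited as an object of independent interest --- but as a proof of this particular theorem your route is tighter.
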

\begin{proof}
Define $h(x)=\sum_{j=0}^\infty \left\{\frac {x}{4^{j}}\right\}$. Observe that
$h(4^n)=\frac 13$ and that for a given $\epsilon$ and $x$ we have that $|h(4^N+x)-h(4^N)-h(x)|<\epsilon$
if $N$ is sufficiently large. It follows that \[h\left(4^{n_1}+4^{n_1+n_2}+\ldots+4^{n_1+n_2+\ldots+n_k}\right)\approx \frac k3\] for $n_1<n_2<\cdots<n_k$ with sufficiently large gaps. If $k$ is a multiple of three
and if all $n_i$ are odd, then $x=4^{n_1}+4^{n_1+n_2}+\ldots+4^{n_1+n_2+\ldots+n_k}$ is divisible by three and $a=2x/3$ is an entry in the sequence $a_n$. Since $h(x)$ is equal to the sum in Equation~\ref{eq2}, we see that it can be arbitrarily large.
\end{proof}

Computer experiments suggest that Equation~\ref{conj1} holds for the $(2,1)$-Queen,
but we are unable to prove this. It is possible
to prove that $a_{b_n}-a_n-b_n$ is bounded by
a result of Boris Adamczewski~\cite{A} on balancedness of words. More specifically, his result implies
that if there are $n$ ones in a factor of length $k$
of a Pisot word, then $|a_n-k|$ is bounded.
Since the substitution $a\to aab,\ b\to aa$ is Pisot,
Adamczewski's result implies that $a_{b_n}-b_n-a_n$ is bounded for the $(2,1)$-Queen.

\section{\texttt{Walnut} supported proof of Theorem~2}

Jeffrey Shallit~\cite{S2} showed how to use \texttt{Walnut} to prove that 
the coded Tribonacci word produces the $P$-positions of
the Queen Dee. 
It is straightforward to adapt his proof for the 2-Queen Dee, as follows.
We say that a triple $(n,x,y)$ with $x<y$ is good if $(x,y)$ has no move to $P_{n-1}$.
By Lemma~\ref{lem1} such a triple satisfies the following conditions
\begin{itemize}
\item $\forall\ k<n\ x\not=a_k\ \wedge \ x\not=b_k$
\item $\forall\ k<n\ y\not=a_k\ \wedge \ y\not=b_k$
\item $\forall\ k<n\ |(y-x)-(b_k-a_k)|>1$
\item $\forall\ k<n\ |(y-x)-(b_k+a_k)|>1$
\end{itemize}
These constraints are first order logic statements that can be implemented
in \texttt{Walnut}:
\medbreak

\begin{raggedleft}
\texttt{
def good "?msd\_trib y>x \&\newline 
(\textasciitilde Ek k<n \& \$uaut(k,x)) \&
(\textasciitilde Ek k<n \& \$vaut(k,x)) \&
\newline
(\textasciitilde Ek k<n \& \$uaut(k,y)) \&
(\textasciitilde Ek k<n \& \$vaut(k,y)) \&
\newline
(\textasciitilde Ek,a,b k<n \& \$uaut(k,a) \& \$vaut(k,b) \& y-x=b-a) \&
\newline
(\textasciitilde Ek,a,b k<n \& \$uaut(k,a) \& \$vaut(k,b) \& y-x=b-a+1) \&
\newline
(\textasciitilde Ek,a,b k<n \& \$uaut(k,a) \& \$vaut(k,b) \& y-x+1=b-a) \&
\newline
(\textasciitilde Ek,a,b k<n \& \$uaut(k,a) \& \$vaut(k,b) \& y-x=b+a+1) \&
\newline
(\textasciitilde Ek,a,b k<n \& \$uaut(k,a) \& \$vaut(k,b) \& y-x+1=b+a) \&
\newline
(\textasciitilde Ek,a,b k<n \& \$uaut(k,a) \& \$vaut(k,b) \& y-x=b+a)":}
\end{raggedleft}
\medbreak
The $P$-position $(a_n,b_n)$ is the unique 
good position that has minimal Manhattan distance to
the origin, as specified by Lemma \ref{lem2}.
To finish the proof of Theorem~\ref{thm2}, we 
need to verify that $(n,a_n,b_n)$
satisfies:
\begin{enumerate}
\item The triple $(n,a_n,b_n)$ is good.
\item If $(n,x,y)$ is good then $a_n\leq x$.
\item If $(n,a_n,y)$ is good then $b_n\leq y$.
\end{enumerate}
%\newline
These are three checks for \texttt{Walnut}:
\newline
\texttt{eval check1 "?msd{\textunderscore}trib An,a,b (n>=1 \& \$uaut(n,a) \& \$vaut(n,b)) => \$good(n,a,b)":}
\newline
\texttt{eval check2 "?msd{\textunderscore}trib An,x,y (n>=1 \& \$good(n,x,y))=>(Ea \$uaut(n,a)\&x>=a)":}
\newline
\texttt{eval check3 "?msd{\textunderscore}trib An,a,y (n>=1 \& \$uaut(n,a) \& \$good(n,a,y)) => (Eb \$uaut(n,b) \& y>=b)":}
\newline \texttt{Walnut} returns \texttt{TRUE} for all three checks.

\section{Acknowledgement}

We would like to thank Karma Dajani for a useful conversation on functions of the type 
$f(x)=\sum_{n=1}^\infty \{ x/\beta^n\}$ for $\beta>1$,
and Neil Sloane for pointing our attention to sequences 
A026367 and A026368.
The anonymous referees made helpful remarks to improve our exposition.
\medbreak
Dan Rust was supported by NWO visitor grant 040.11.700.

% Non-BibTeX users please use

\noindent
Institute of Applied Mathematics\\
Delft University of Technology\\
Mourikbroekmanweg 6 \\
2628 XE Delft, The Netherlands\\
\texttt{r.j.fokkink@tudelft.nl}
\\[2mm]
Department of Mathematics\\
Ateneo de Manila University\\
Quezon City 1108, Philippines\\
\texttt{gfmortega@gmail.com}
\\[2mm]
School of Mathematics and Statistics\\ 
The Open University, 
Walton Hall\\ 
Milton Keynes\\ 
MK7 6AA\\ 
UK\\
\texttt{dan.rust@open.ac.uk}

\end{document}